\def\P{\bar{P}}
\newtheorem{theorem}{Theorem}[section]
\newtheorem{lemma}[theorem]{Lemma}
\theoremstyle{definition}
\newtheorem{observation}[theorem]{Observation}
\newtheorem{remark}[theorem]{Remark}
\newtheorem{example}[theorem]{Example}
\numberwithin{equation}{section}
\numberwithin{figure}{section}
  \title{Adaptive dynamics for individual payoff game-theoretic models of vaccination }
\author[1]{Nataliya Balabanova}
\author[2]{Manh Hong Duong}
\affil[1,2]{School of Mathematics, University of Birmingham, UK}
\date{}
\begin{document}

\maketitle

\begin{abstract}
Vaccination is widely recognised as one of the most effective forms of public health interventions. Individuals decisions regarding vaccination creates a complex social dilemma between individual and collective interests, where each person’s decision affects the overall public health outcome. In this paper, we study the adaptive dynamics for the evolutionary dynamics of strategies in a fundamental game-theoretic model of vaticination. We show the existence of an (Nash) equilibrium and analyse the stability and bifurcations when varying the relevant parameters. We also demonstrate our analytical results by several concrete examples.
\end{abstract}
\section{Introduction}
\label{sec: introduction}
According to the World Health Organisation (WHO), immunization is one of the most effective ways of public health interventions and vaccination plays a crucial role in controlling the spread of infectious diseases; this is backed by a wealth of research data (\cite{WHO, 10.1001/archpedi.159.12.1136, amanna2008protective, hamborsky2015epidemiology, kim2016advisory}). Individual decisions regarding vaccination are often influenced by personal perceptions of risk and reward.  In the context of vaccination, individuals weigh the benefits of immunization—such as personal protection and contributing to herd immunity—against the perceived risks, such as side effects or the belief that they may not need to vaccinate if others do. This creates a complex social dilemma between individual and collective interests, where each person’s decision affects the overall public health outcome ~\cite{bauch2003group,bauch2004vaccination}. 

Because of these reasons, game theory, which is a well-established  mathematical framework used to analyze strategic decision-making, offers valuable insights into these vaccination behaviors. There is now a large body of literature on using game theory to study vaticination. Using game theoretic approaches allows researchers to predict how people will behave in different vaccination scenarios, such as when disease prevalence is low or when vaccine scares occur. In addition, game-theoretical approaches also reveal that individuals may choose not to vaccinate, relying on others to maintain herd immunity, a behavior that can lead to suboptimal vaccination coverage and the resurgence of preventable diseases.  For instance, the public opposition movement in the USA  has become significantly more well-organised and networked in the past two decades (\cite{carpiano2023confronting}); while parental vaccine hesitation worldwide has been also listed as a growing concern (\cite{mckee2016exploring}). This analysis highlights the importance of understanding strategic interactions among individuals in the design of effective vaccination policies. It also provides a foundation for exploring how incentives, public information campaigns, and policy interventions can be optimized to encourage higher vaccination uptake and ensure widespread immunity across populations. We refer the reader to ~\cite{bauch2003group,bauch2004vaccination,chapman2012using,hausken2022game,traulsen2023individual} and references therein for more information on this topics.

In this paper, we adopt the adaptive dynamics to analyse the evolutionary dynamics of strategies in game models of vaticination \cite{bauch2004vaccination}. By its definition, the adaptive dynamics predicts the most profitable strategy for a deviating minority to adopt under given conditions. Therefore, it is ideally suited in models that are honed to predict the behaviour of the individuals acting in what they believe is their best interest. The adaptive dynamics has been used extensively in a huge body of works in population dynamics and evolutionary game theory and other fields, including researches on infectious diseases, see for instance~\cite{van2002adaptive}. We also refer the reader to the monograph \cite{hofbauer1998evolutionary} and recent surveys \cite{brannstrom2013hitchhiker,de2024dynamics} for a detailed account of this topics including many applications.  

We describe our model and specify the assumptions in detailed in Section \ref{sec: adaptive dynamics}.  In our setting, the adaptive dynamics is given by a first order differential equation with two parameters $r$ and $\epsilon$ -- respectively the relative risk of vaccination and the percentage of people deviating from the population's strategy. We analyse the model from the point of view of differential equations and bifurcation theory (see \cite{crawford1991introduction, kuznetsov1998elements}). In Section \ref{sec: stability analysis}, we show that a (Nash) equilibrium always exists; in fact, there can be more than one equilibrium state, alternating between stable and unstable. In Section \ref{sec: parameters and bifurcations} we investigate the effect that the parameters have on the number and stability of equilibrium states. Lastly,  in Section  \ref{sec: examples} we consider two concrete examples of interest: one with a singular equilibrium and the other with multiple equilibria, and draw bifurcation diagrams for the latter.

\section{The game-theoretic model for vaccination and the adaptive dynamics}
\label{sec: adaptive dynamics}
\subsection{A game-theoretic model for vaccination}
We consider a game model of vaccination following the approach in the seminal paper \cite{bauch2004vaccination}.  In this model, all individuals in the population receive the same information and use it uniformly to assess risks. An individual’s strategy is defined by the probability, $P$, that they will choose to get vaccinated. The overall vaccine uptake in the population represents the proportion of newborns who are vaccinated, which reflects the average of the strategies adopted by individuals. We assume there is no delay between changes in vaccine uptake and subsequent changes in overall vaccine coverage. Therefore, if no disease- or vaccine-related deaths occur, the proportion of vaccinated individuals, $p$, will match the vaccine uptake rate. The payoff to an individual will be greater when the morbidity risk, which is the probability of adverse consequences, is lower. Let $\pi(p)$ denote the probability that an unvaccinated individual will eventually be infected if the vaccine coverage level in the population is $p$. With this notation, the strategy of vaccinating with probability $P$ yields an expected payoff given by \cite{bauch2004vaccination}
\begin{equation}
\label{eq:general payoff}
E(P,p)=-rp-\pi(p)(1-P),    
\end{equation}
where $0<r<1$ is a parameter (the so-called the relative risk), which the ratio of the risk of infection to a vaccinated individual to the risk of the  infection to an unvaccinated individual.

To proceed further, we suppose that a proportion $\epsilon$ ($0\leq \epsilon\leq 1$) of the population vaccinates with a probability $P$ and the remainder vaccinate with a probability $Q$. Then the common  strategy $p$ on the entire group can be expressed as 
 \begin{equation}
 \label{eq:population composition}
     p = \epsilon P + (1-\epsilon)Q.
      \end{equation}
Substituting this expression into \eqref{eq:general payoff}, the (normalised) individual payoff of a $P$-strategist in the population of composition (\ref{eq:population composition}) is given by
 \begin{equation}
     \label{eq: payoff}
     E(P, \epsilon P + (1-\epsilon)Q) = -rP-\pi\left(\epsilon P + (1-\epsilon)Q\right)(1-P).
 \end{equation}
Using this payoff function, we will formulate the adaptive dynamics to study the evolutionary dynamics of the strategy $P$. To this end, we first recall the mathematical formulation of the adaptive dynamics in a general setting.
\subsection{Adaptive dynamics}
We consider a simplified model of population dynamics, that consists of an asexually reproducing resident population characterised by some trait (strategy) $x$. The population is regularly challenged by mutants with trait (strategy) $y$. The new mutant may successfully invade and take over the resident population or die out, depending on the fitness advantage (or disadvantage) that the mutant faces in the homogeneous population of residents. This is characterised by a payoff function~$A(y,x)$, depending on both $x$ and $y$. More specifically, mutants successfully invade and subsequently become the majority if and only if the following inequality holds \cite{hofbauer1998evolutionary}
\[
A(y,x)> A(x,x).
\]
This condition means that, mutants receive a higher payoff against residents than the residents receive themselves. Thus the invasion fitness quantifies the evolutionary advantage of mutations. One can also formally write the evolutionary equation for the direction of the mutation that yields the largest invasion fitness. Assuming that the  mutant trait~$y$ is infinitesimally close to the resident trait~$x$. At every subsequent time step, the mutant with the largest payoff advantage reaches fixation and forms the new resident population. This new population is then again challenged by those mutants that have the largest advantage in the new environment, and so on. This resulting evolutionary process can be mathematically characterized by the adaptive dynamics, first proposed by Hofbauer and Sigmund in 1990, which is given by (see e.g. \cite[Chapter 9]{hofbauer1998evolutionary}):
\begin{equation}
    \label{eq: general adaptive dynamics}
    \dot{y} = \frac{\partial A(x,y)}{\partial y}\vert_{y=x}.
\end{equation}
From a mathematical point of view, this is in general a nonlinear ordinary differential equation. The specific form of the payoff function depends on concrete applications. We refer the reader to the monograph \cite{hofbauer1998evolutionary} and recent surveys \cite{brannstrom2013hitchhiker,de2024dynamics} for a detailed account of the adaptive dynamics, which also include many interesting applications.
\subsection{Adaptive dynamics for the game-theoretic model of vaccination}
We now apply the adaptive dynamics \eqref{eq: general adaptive dynamics} to study the evolutionary dynamics of the strategy $P$ using the payoff function \eqref{eq: payoff}. We first compute the derivative of the payoff function with respect to $P$:
 \begin{equation}
 \begin{split}
     \frac{\partial E(P,\epsilon P + (1-\epsilon)Q) }{\partial P}\vert_{P=Q} &= \bigl(-r - \epsilon \pi'(\epsilon P + (1-\epsilon) Q)(1-P)  \\&+ \pi(\epsilon P + (1-\epsilon) Q)\bigr)\vert_{P=Q}\\&= -r-\epsilon\pi'(P)(1-P)+  \pi(P).
 \end{split}
 \end{equation}
 Therefore, the adaptive dynamics for the dynamics of $P$ is given by
 \begin{equation}
\label{eq: adaptive dynamics}
\dot{P}=\frac{\partial E(P,\epsilon P + (1-\epsilon)Q) }{\partial P}\vert_{P=Q} = -r - \epsilon\pi'(P)(1-P)  + \pi(P).
 \end{equation}
From a mathematical point of view, the equation (\ref{eq: adaptive dynamics}) is an nonlinear ordinary differential equation (ODE) in one variable, with two parameters $r$ and $\epsilon$. In the next section, we will analyse this equation from the point of view of differential equations and bifurcation theory. In particular, we will determine the possible number of equilibria and their stability and describe the bifurcations arising from varying the parameters $r$ and $\epsilon$. We will study these properties with a general payoff function $\pi(p)$ in the next two sections, then we provide concrete examples in the last section.

\section{Solutions and equilibrium analysis}
\label{sec: stability analysis}
\subsection*{Assumptions on the payoff function $\pi(p)$}
We assume that in our model  getting vaccinated completely eliminates the possibility of becoming infected and that the increase in numbers of vaccinated people leads to a decrease in new cases and thus lowers the probability of virus exposure for the focal individual. After some threshold $p^{\ast}$ of rate of vaccination is surpassed, there is practically no chance of encountering an infected person; thus, $\pi(p)$ tuns zero. Note that the value of $p^{\ast}$ in our model 
is arbitrary and can be raised to $1$.

These natural assumptions  entail that the function $\pi(p)$ must have the following properties:
\begin{enumerate}
    \item $\pi$ is smooth; this property is usually a matter of convenience, but in this case it plays a crucial role, as we will see later;
    \item $0<\pi(p)<1$ when $p\in[0,1]$;
    \item $\pi$ is monotonously decreasing on $[0,1]$;
    \item $\pi(0)=1$;
    \item $\pi(p) = 0$ when $p$ is greater than some fixed $p^{\ast}$
\end{enumerate}
\begin{figure}
\centering
\includegraphics[scale=.7]{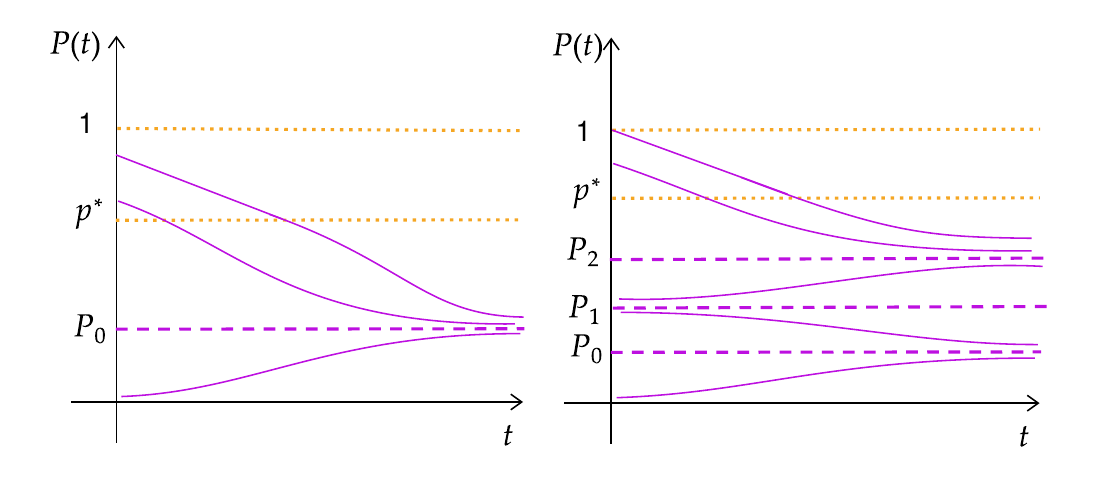}
\caption{Schematic phase portrait of $P(t)$, depending on the number of equilibria of (\ref{eq: equilibria}). Equilibria are denoted by $P_i$ and dashed purple lines, non-constant solutions-by non-dashed purple lines and intrinsic constants 1 and $p^{\ast}$ -- by orange dotted lines.}
\label{fig: phase portrait}
\end{figure}

We assume these properties throughout the paper. In this section, we start with investigating the solutions of (\ref{eq: adaptive dynamics}), focusing on existence and stability of equilibria. The parameters $r$ and $\epsilon$ are assumed to be fixed throughout this section.

Our first theorem below is on the existence of equilibria of the adaptive dynamics \eqref{eq: adaptive dynamics}.
 \begin{theorem}
\label{st: existence of zeros}
\begin{enumerate}[(i)]
    \item The equation (\ref{eq: adaptive dynamics}) has at least one equilibrium point.  
\item If $\pi$ is convex then it has a unique stable (Nash) equilibrium, which is decreasing as a function of the relative risk $r$, but increasing as a function of the proportion $\epsilon$. In addition, it belongs to the interval
\[
\left[\pi^{-1}(r), \min\left\{p^{\ast},\pi'^{-1}\left(\frac{1-r}{\epsilon}\right)\right\}\right].
\]
\end{enumerate}
\end{theorem}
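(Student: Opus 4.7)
My plan is to set
\[
F(P):=-r-\epsilon\pi'(P)(1-P)+\pi(P),
\]
so that the equilibria of the adaptive dynamics \eqref{eq: adaptive dynamics} are exactly the zeros of $F$ on $[0,1]$, and then to establish (i) by an intermediate value argument on $[0,p^{\ast}]$ and (ii) by showing that $F$ is monotone under convexity, combined with the implicit function theorem for the dependence on parameters.

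First, for (i), I would evaluate $F$ at the endpoints of $[0,p^{\ast}]$. Assumptions (4) and (3) give $\pi(0)=1$ and $\pi'(0)\leq 0$, so
\[
F(0)=1-r-\epsilon\pi'(0)\geq 1-r>0.
\]
For the right endpoint, smoothness (assumption (1)) together with $\pi\equiv 0$ on $[p^{\ast},1]$ forces $\pi(p^{\ast})=\pi'(p^{\ast})=0$, hence $F(p^{\ast})=-r<0$. The intermediate value theorem then produces an equilibrium in $(0,p^{\ast})$; this is precisely the promised nontrivial use of the smoothness assumption.

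For (ii) I would differentiate:
\[
F'(P)=-\epsilon\pi''(P)(1-P)+(1+\epsilon)\pi'(P).
\]
Convexity gives $\pi''\geq 0$ and assumption (3) gives $\pi'\leq 0$, so both summands are non-positive on $[0,1]$ and $F$ is non-increasing. Uniqueness of the zero from (i) is then immediate, and $F'(P^{\ast})\leq 0$ makes $P^{\ast}$ a Lyapunov-stable equilibrium of $\dot P=F(P)$; asymptotic stability follows as soon as either term above is strict at $P^{\ast}$, which holds unless $\pi$ happens to be affine there. The monotonicity in the two parameters is then a direct application of the implicit function theorem to $F(P,r,\epsilon)=0$: since $\partial_r F=-1$ and $\partial_P F\leq 0$, one gets $\partial_r P^{\ast}\leq 0$; since $\partial_\epsilon F=-\pi'(P)(1-P)\geq 0$, one gets $\partial_\epsilon P^{\ast}\geq 0$.

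Finally, I would read the interval off the rearranged equilibrium identity $\pi(P^{\ast})=r+\epsilon\pi'(P^{\ast})(1-P^{\ast})$. The quantity $\epsilon\pi'(P^{\ast})(1-P^{\ast})$ is non-positive, so $\pi(P^{\ast})\leq r$, and inverting the decreasing function $\pi$ gives $P^{\ast}\geq\pi^{-1}(r)$. The upper bound $P^{\ast}\leq p^{\ast}$ is already at hand from $F(p^{\ast})<0$ and monotonicity of $F$. The remaining bound $P^{\ast}\leq\pi'^{-1}((1-r)/\epsilon)$ I would obtain by isolating $\pi'(P^{\ast})=(\pi(P^{\ast})-r)/[\epsilon(1-P^{\ast})]$, combining $\pi(P^{\ast})\leq 1$ with $1-P^{\ast}\leq 1$, and inverting using that $\pi'$ is non-decreasing under convexity. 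I expect the main delicate step to be the careful sign-bookkeeping in this last inversion; the rest is elementary calculus and standard one-dimensional ODE stability theory.
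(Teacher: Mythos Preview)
Your plan for (i) and for the uniqueness, stability, and parameter--monotonicity parts of (ii) coincides with the paper's argument: the paper also sets $f(P)=-r-\epsilon\pi'(P)(1-P)+\pi(P)$, checks the endpoint signs (using $P=1$ rather than $P=p^{\ast}$, but either works), computes $f'(P)=(1+\epsilon)\pi'(P)-\epsilon(1-P)\pi''(P)\le 0$ under convexity, and then applies the implicit function theorem to $f(P,r,\epsilon)=0$ exactly as you do. Your lower bound $P^{\ast}\ge\pi^{-1}(r)$ via $\pi(P^{\ast})\le r$ is the same observation the paper makes, just read off the equilibrium identity directly instead of by evaluating $f$ at $\pi^{-1}(r)$.

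The one genuine divergence is the bound $P^{\ast}\le\pi'^{-1}\bigl((1-r)/\epsilon\bigr)$. The paper does \emph{not} work directly from the equilibrium equation here. It invokes convexity a second time, via the chord inequality $\pi(P)\le 1-P/p^{\ast}$ on $[0,p^{\ast}]$, to build an auxiliary majorant $h(P)=1-P/p^{\ast}-r-\epsilon\pi'(P)(1-P)\ge f(P)$; the unique root $\bar P$ of $h$ then satisfies $P_0\le\bar P$, and the paper extracts $\pi'(\bar P)\le(1-r)/\epsilon$ by rewriting $h(\bar P)=0$ as $\pi'(\bar P)=g(\bar P)$ for a decreasing rational function $g$ with $g(0)=(1-r)/\epsilon$. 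Your direct route is shorter, but your suspicion about the ``sign-bookkeeping'' is well founded, and for a reason that also afflicts the paper's longer argument: since $\pi'\le 0$ on $[0,1]$ while $(1-r)/\epsilon>0$, the inequality $\pi'(P^{\ast})\le(1-r)/\epsilon$ is automatic, and $\pi'^{-1}\bigl((1-r)/\epsilon\bigr)$ does not lie in the range of $\pi'$ at all. (Incidentally, ``$\pi(P^{\ast})\le 1$ and $1-P^{\ast}\le 1$'' do not combine to bound a quotient in the way you suggest; what actually gives the inequality is simply $\pi'(P^{\ast})\le 0$.) In effect, the informative upper bound is just $P^{\ast}\le p^{\ast}$, which you have already obtained cleanly from $F(p^{\ast})<0$ and monotonicity.
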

\begin{figure}
    \centering
   \subfigure[One intersection
]{\includegraphics[scale = .393]{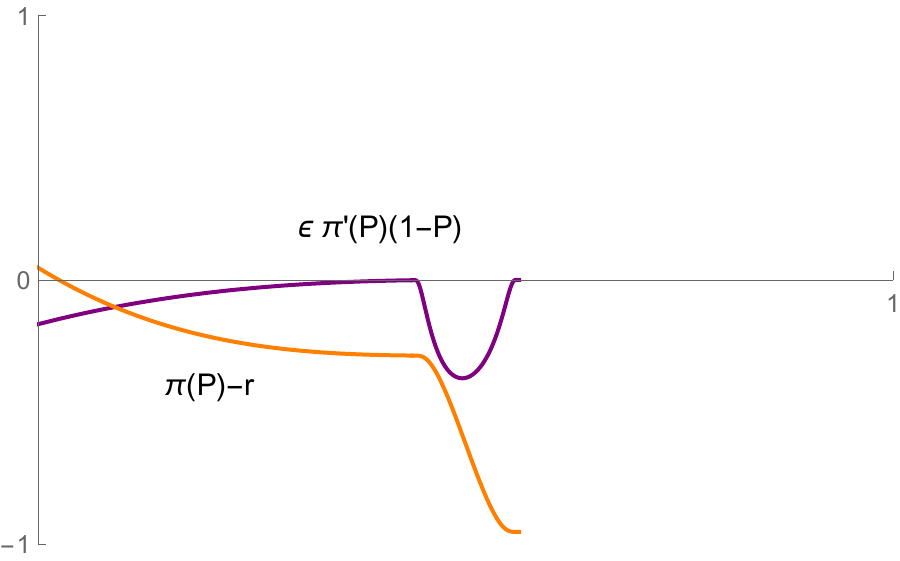}}
      \subfigure[Multiple intersections]{\includegraphics[scale = .393]{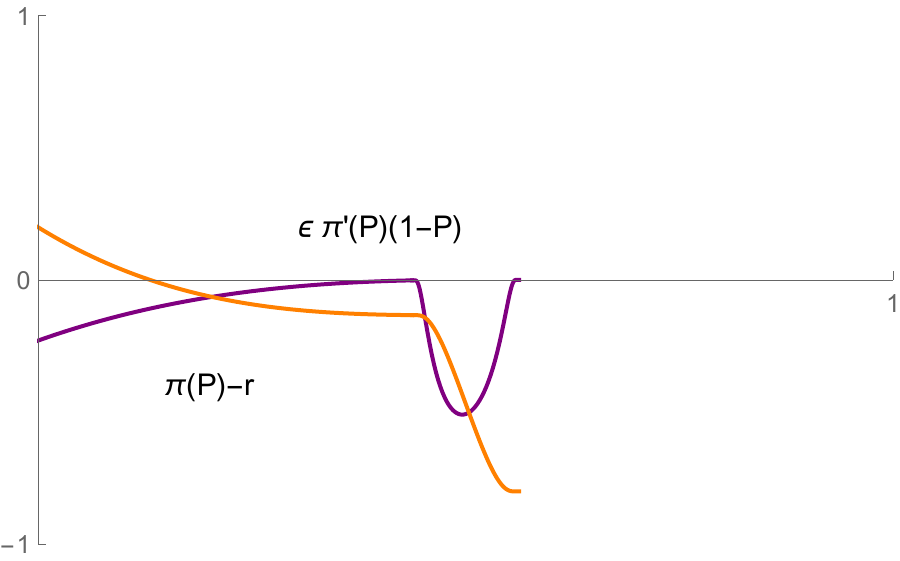}}
      \caption{Varying number of intersections depending on the form f $\pi(p)$}
    \label{fig:intersections}
\end{figure}
\begin{proof}
(i) The condition for equilibria of (\ref{eq: adaptive dynamics}) is 
    \begin{equation*}
         f(P):=-r - \epsilon\pi'(P)(1-P)  + \pi(P)=0
    \end{equation*}
The function $f: [0,1]\rightarrow \mathbb{R}$ is continuous. Using the assumptions on $\pi$, we have
\[
f(0)=-r-\epsilon\pi'(0)+\pi(0)= (1-r)-\epsilon \pi'(0)>0,
\]
since $0<r<1$ and $\pi'(0)\leq 0$, which is because $\pi$ is monotonously decreasing on $[0,1]$.  We also have
\[
f(1)=-r+\pi(1)=-r<0.
\]
It follows that, since $f$ is continuous, it has at least one root in $[0,1]$. 

We can also demonstrate the existence of an equilibrium geometrically. We can rewrite the equation $f(P)=0$  as 
    \begin{equation}
        \label{eq: equilibria}
     \pi(P)  -r = \epsilon\pi'(P)(1-P)  
    \end{equation}
    The left hand side of (\ref{eq: equilibria}) is a decreasing function with a maximum of 
    $1-r$ at 0 and a value of $-r$ when $P>p^{\ast}$. On the other hand, the function $\epsilon \pi'(P)(1-P)$ is a negative function that turns 0 when $P>p^{\ast}$. Therefore, the two functions must intersect at least in one point with the intersection lying between $0$ and $p^{\ast}$. See also Figure \ref{fig:intersections} for an illustration of possible scenarios.

(ii) Now we suppose that $\pi$ is convex. We will show that there is a unique Nash equilibrium and study its behaviours with respect to the parameters $r$ and $\epsilon$. The derivative of $f$ with respect to $P$ is given by
\[
f'(P)=\pi'(P)(1+\epsilon)-\epsilon(1-P)\pi''(P)\leq 0,
\]
since $\pi'(P)\leq 0$ and $\pi''(P)\geq 0$. It follows $f$ is monotonously decreasing on $[0,1]$, therefore it has a unique root $P_0$, which is stable. Therefore it is a Nash equilibrium. To study the behaviour of $P_0$ with respect to the relative risk $r$, we consider $P_0$ as a function of $r$. By the implicit theorem, we have
\[
\frac{\partial P_0}{\partial r}=-\frac{\partial_r f}{\partial_P f}(r,P_0)=\frac{1}{\pi'(P_0)(1+\epsilon)-\epsilon(1-P_0)\pi''(P_0)}\leq 0.
\]
Thus $P_0$ is decreasing as a function of the relative risk $r$. Similarly, considering $P_0$ as a function of $\epsilon$, we get
\[
\frac{\partial P_0}{\partial \epsilon}=-\frac{\partial_\epsilon f}{\partial_P f}(\epsilon,P_0)=\frac{\pi'(P_0)(1-P_0)}{\pi'(P_0)(1+\epsilon)-\epsilon(1-P_0)\pi''(P_0)}\geq 0.
\]
Thus $P_0$ is increasing as a function of $\epsilon$. It remains to establish the lower bound and upper bound for $P_0$. 

The function $f(P)$ is monotonously decreasing; additionally, since $\pi(P)$ is a monotonous function, there exists a unique preimage of $r$. The value $f(\pi^{-1}(r))$ will therefore be
   \[
   r - r - \epsilon \pi'(\pi^{-1}(r))(1 - \pi^{-1}(r))=- \epsilon \pi'(\pi^{-1}(r))(1 - \pi^{-1}(r))>0,
   \]
forcing $P_0>\pi^{-1}(r)$.

For the upper bound, we note that 
\[
f(P)<1-\frac{P}{p^{\ast}}-r -\epsilon\pi'(P)(1-P)=:h(P),
\]
since $\pi(P)\le1 - P/p^{\ast}$ due to concavity of $\pi$. We have
\[
h'(P)=-\frac{1}{p^*}-\epsilon \pi''(P)(1-P)+\epsilon \pi'(P)<0
\]
since $0<p^*<1, \pi''(P)\geq 0, \pi'(P)\leq 0$. Therefore $h$ is monotonously decreasing. In addition, since $\pi'(0)\leq 0$, $\pi'(1)=0$, and $0<p^*<1$ we have
\[
h(0)=1-r-\epsilon \pi'(0)>0 \quad\text{and}\quad h(1)=1-\frac{1}{p^*}-r<0.
\]
It implies that $h$ has a unique solution $\P$ with $0<\P<1$, that is $0<\P<1$ satisfies the following equation:
   \begin{equation}
   \label{eqn}
   1 - \frac{\P}{p^{\ast}}-r - \epsilon \pi'(\P)(1-\P) = 0.   
   \end{equation}
Since $f(P)<h(P)$ and both functions are monotonously decreasing, for any $\alpha>0$ we have
\[
0=f(P_0)= h(\P)> h(\P+\alpha)> f(\P+\alpha)
\]
Therefore, $P_0<\P+\alpha$ for all $\alpha>0$, which implies that $P_0\leq \P$.

Equation \eqref{eqn} can be transformed as follows: 
   \begin{equation*}
       \begin{split}
\epsilon \pi'(\P)(1-\P) &= \frac{p^{\ast}(1-r) - \P}{p^{\ast}} \\
\pi'(\P) &=\frac{1}{\epsilon p^{\ast}}\frac{p^{\ast}(1-r) - \P}{1-\P}\\
 \pi'(\P) &=\frac{1}{\epsilon p^{\ast}}\frac{\P-p^{\ast}(1-r) }{\P-1} \\
 \pi'(\P) &=\frac{1}{\epsilon p^{\ast}}\left(1 + \frac{1 - p^{\ast}(1-r)}{\P-1}\right).
       \end{split}
   \end{equation*}
Let
\[
g(P):= \frac{1}{\epsilon p^{\ast}}\left(1 + \frac{1 - p^{\ast}(1-r)}{P-1}\right).
\]
Then, since $1 - p^{\ast}(1-r)>0$, we have
\[
g'(P)=-\frac{1}{\epsilon p^*}\frac{1-p^*(1-r)}{(P-1)^2}<0.
\]
Therefore $g$ is monotonously decreasing. It implies that, since $\P>0$,
\begin{equation}
\label{eq: inequality}
\pi'(\P)=g(\P)\leq g(0)=\frac{1}{\epsilon p^{\ast}}\left(1 + \frac{1 - p^{\ast}(1-r)}{-1}\right)=\frac{1-r}{\epsilon}.
\end{equation}
Since $\pi(P)$ is convex, $\pi''(P)$ is a positive function. Thus $\pi'(P)$ is an increasing function (albeit a negative one). Hence from the inequality \eqref{eq: inequality} we deduce that
\[
\P\leq \pi'^{-1}\left[\frac{1-r}{\epsilon}\right].
\]
This finishes the proof of this theorem.

\end{proof}
   \begin{remark}
We note that no general claims (independent of $r$ and $\epsilon$) can  be made about the upper or lower  bounds of the equilibrium $P_0$, since as $r\to 0$, $P_0\to p^{\ast}$ and $P_0\to0 $ as $r\to1$. In addition, in general since $f(P)$ is a nonlinear function, there is no explicit formula for $P_0$. One can use various numerical methods (such as  Newton's method) to compute $P_0$. We provide concrete examples of the payoff function and numerically compute the equilibria in Section \ref{sec: examples}.  
    \end{remark}
    We call the equilibria, in order of increase in $P$-coordinate, $P_0$, $P_1$ and so on, with the general notation $P^{\ast}$ for the  set of all equilibria.
        Figure \ref{fig: phase portrait} illustrates the phase portrait of $P(t)$ depending on the number of rest states.  

    \begin{remark}
        Observe that when $P>p^{\ast}$, the derivative $\dot{P}$ is constant and equal to $-r$; hence the function $P(t)$ is linear when $P>p^{\ast}$. 
    \end{remark}
The following lemma establishes the stability analysis for the equilibria.
\begin{lemma}
\label{st: number of equilibria}
The following hold for the equation (\ref{eq: equilibria}):
\begin{enumerate}
    \item There can only be an odd number of non-degenerate equilibrium points.
    \item If all equilibria are nondegenerate, they alternate between stable and unstable, starting with stable. 
\end{enumerate}
\end{lemma}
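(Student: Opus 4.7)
The plan is to reduce both claims to an elementary sign-change argument for the smooth function $f(P) = -r - \epsilon\pi'(P)(1-P) + \pi(P)$ on $[0,1]$, using the boundary values already computed in the proof of Theorem~\ref{st: existence of zeros}, namely
\[
f(0) = (1-r) - \epsilon\pi'(0) > 0, \qquad f(1) = -r < 0.
\]
Non-degeneracy of an equilibrium $P_i$ is taken to mean $f'(P_i) \neq 0$; since $f$ is smooth (inherited from smoothness of $\pi$), such zeros are isolated by the implicit function theorem, so there can only be finitely many on the compact interval $[0,1]$.

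For part (1), I would observe that at each non-degenerate zero $P_i$, the local expansion $f(P) = f'(P_i)(P - P_i) + o(P - P_i)$ shows that $f$ changes sign transversally. Hence if $f$ has exactly $n$ non-degenerate zeros in $[0,1]$, the sign of $f$ flips exactly $n$ times as $P$ traverses the interval. Since $f(0) > 0$ and $f(1) < 0$ are of opposite sign, $n$ must be odd.

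For part (2), I would invoke the standard linearisation criterion for the scalar ODE $\dot{P} = f(P)$: an equilibrium $P_i$ is (asymptotically) stable when $f'(P_i) < 0$ and unstable when $f'(P_i) > 0$. Order the equilibria as $P_0 < P_1 < \cdots < P_{2k}$. On $[0, P_0)$, $f$ retains its positive boundary value (there are no zeros in between, and $f$ is continuous), so the transversal crossing at $P_0$ must go from positive to negative, giving $f'(P_0) < 0$ and stability. By the same reasoning applied inductively on each interval $(P_i, P_{i+1})$, the sign of $f$ is constant between consecutive equilibria, so the signs of $f'(P_i)$ alternate, starting negative at $P_0$. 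This yields the claimed alternation between stable and unstable equilibria, beginning with stable, and is consistent with the terminal sign $f(1) < 0$ (since an odd number of flips leaves $f$ negative past the last equilibrium).

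The argument is almost purely topological and requires no convexity of $\pi$; the only thing to be careful about is the bookkeeping of sign preservation on each open interval between equilibria, which follows immediately from the intermediate value theorem together with the absence of further zeros in that interval. There is no substantive obstacle.
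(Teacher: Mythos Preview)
Your proposal is correct and follows essentially the same sign-change/transversal-crossing argument as the paper's proof: the paper phrases it geometrically as the graph of $\epsilon\pi'(P)(1-P)$ starting below and ending above the graph of $\pi(P)-r$, forcing an odd number of transversal intersections, while you phrase the equivalent fact analytically via $f(0)>0$, $f(1)<0$ and sign flips at non-degenerate zeros. Your treatment of part~(2) via linearisation and induction on consecutive intervals is a more carefully written version of what the paper sketches pictorially, but the underlying idea is the same.
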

\begin{proof}

    The first statement immediately follows from the proof of Theorem \ref{st: existence of zeros}, namely, the monotonicity of the left hand side of (\ref{eq: equilibria}). The graph of the function $\epsilon\pi'(P)(1-P)$ has to end up above that of $\pi(P)-r$, yet it starts below; therefore, there can only be an odd number of transversal intersections. 

    The second statement is evident from the geometric from of the intersections - see Figure \ref{fig:intersections} (a) and (b). An equilibrium is stable when $\dot{P}$ turns from positive to negative and unstable otherwise; the former happens at odd number intersections and the latter at ones that have even number.

\end{proof}
\begin{figure}
 \subfigure[Varying $r$ with fixed $\epsilon\ne 0$]{\includegraphics[scale=.375]{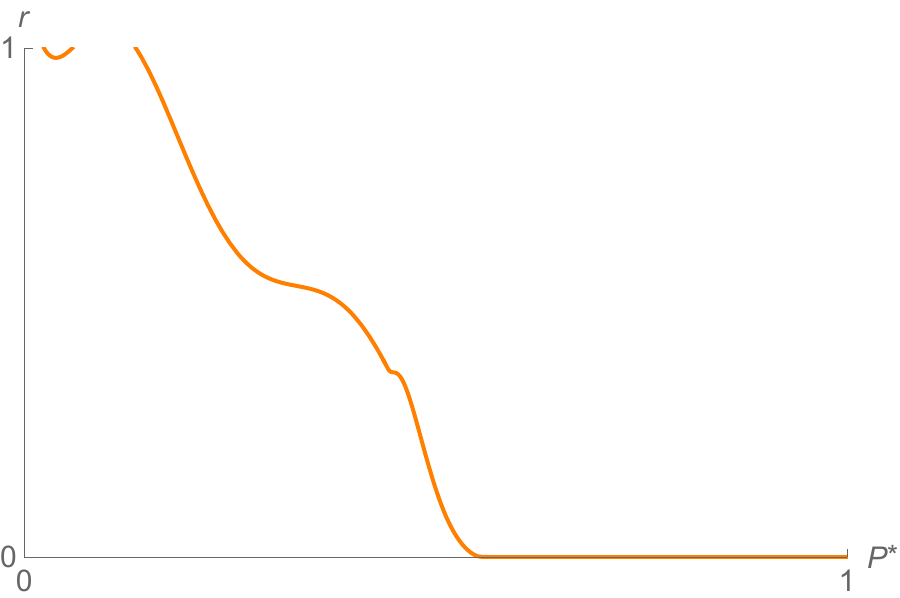}}
 \subfigure[Varying $\epsilon$ with fixed $r\ne 0$]{\includegraphics[scale=.375
]{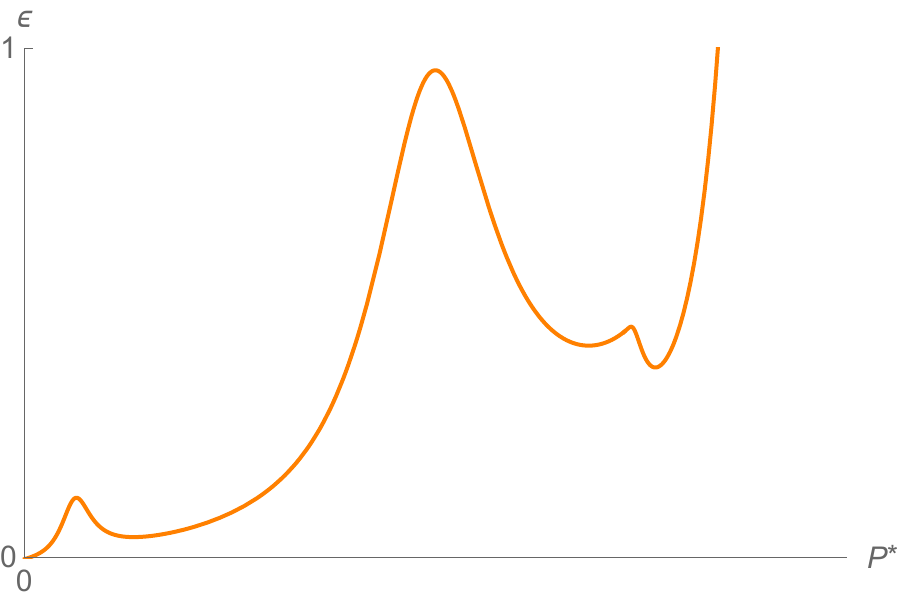}}
\caption{Possible bifurcation diagrams of equilibria depending on parameters}
\label{fig: bifurcation diagrams}
\end{figure}
When an equilibrium is unique, it is stable and therefore Nash; deviating from it would result in a loss to the individual. In the presence of multiple equilibria, only stable ones will be Nash, as the individual is incentivised to move away from the unstable ones.  
\section{Parameters and bifurcations}
\label{sec: parameters and bifurcations}
In this section, we analyse the bifurcations of the equation (\ref{eq: adaptive dynamics}) stemming from the presence of the two important parameters $\epsilon$ and $r$, where we recall from Section \ref{sec: adaptive dynamics} that $\epsilon$ is the proportion of the population vaccinated with a probability $P$ and $r$ is the relative-risk. We start with the simpler case, that of the relative-risk, $r$.

Note that in this section we will denote the axis on which the equilibria lie by $P^{\ast}$ -- not to be confused with the epidemiological parameter $p^{\ast}$.
\subsection{Parameter $r$}
\label{sec: parameter r}
In order to study the bifurcations, we fix some general (for our purposes that effectively means that it is nonzero) value of $\epsilon$ and observe how varying $r$ affects solutions of (\ref{eq: adaptive dynamics}). 

Varying $r$ slides the graph of $\pi(P)-r$ up and down; the equilibria slide with it, staying on the curve. 

\begin{observation}
\label{st:obs equilibria}
\begin{enumerate}
\item The only type of bifurcations that happens is \textit{saddle-node}, i.e. appearance of a degenerate equilibrium and its subsequent change into a stable-unstable-pair or a merge of a stable-unstable pair into a degenerate equilibrium and its disappearance. 
\item When a stable-unstable pair appears, the value of the stable equilibrium decreases with the increase of $r$, and the value of the unstable one increases. Thus, merging happens between a stable and an unstable equilibria that came from neighbouring degenerate ones. 
\end{enumerate}
\end{observation}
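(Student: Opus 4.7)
The plan is to view the equilibrium condition
\[
f(P,r) := -r - \epsilon \pi'(P)(1-P) + \pi(P) = 0
\]
as defining a one-parameter family of zeros in the $(r,P)$-plane, and to track how this family deforms with $r$ using the implicit function theorem together with the elementary bifurcation theory of one-dimensional flows (see e.g.\ \cite{crawford1991introduction, kuznetsov1998elements}).

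For statement (1), the key observation is that $\partial_r f(P,r)\equiv -1$, so the parametric non-degeneracy hypothesis of the saddle-node theorem is automatically satisfied everywhere. Therefore, at any non-degenerate equilibrium ($\partial_P f\neq 0$) the implicit function theorem produces a locally unique smooth branch $P^{\ast}(r)$, and no topological change in the equilibrium set can occur. Bifurcations thus arise only at points where $f=\partial_P f=0$ simultaneously -- geometrically, the points where the two curves of Figure \ref{fig:intersections} are tangent. Assuming the generic transversality $\partial^2_{PP} f(P_0,r_0)\neq 0$ (quadratic rather than higher-order contact between the curves), the classical saddle-node normal form $\pm(r-r_0)=a(P-P_0)^2$ applies, which encodes exactly the birth of a stable-unstable pair or its annihilation. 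Since the phase space is one-dimensional, no other codimension-one local bifurcation (Hopf, period-doubling, \dots) is available, so the saddle-node exhausts all possibilities.

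For statement (2), implicit differentiation of $f(P^{\ast}(r),r)=0$ combined with $\partial_r f=-1$ yields
\[
\frac{dP^{\ast}}{dr}\;=\;\frac{1}{\partial_P f(P^{\ast},r)}.
\]
Linear stability for the scalar ODE (\ref{eq: adaptive dynamics}) is governed by the sign of $\partial_P f$: stable iff $\partial_P f<0$, unstable iff $\partial_P f>0$. This immediately gives $dP^{\ast}/dr<0$ at every stable equilibrium and $dP^{\ast}/dr>0$ at every unstable one, which is the first half of (2). For the second half, I would combine this with the normal form from (1): at a fold point, the two emerging equilibria lie on a single smooth branch of the equilibrium curve in the $(r,P)$-plane and therefore separate monotonically as $r$ moves away from the fold, with the stable branch heading in one $P$-direction and the unstable branch in the other. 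Consequently, a born-together pair can never collide under subsequent variation of $r$. Any merging event must therefore involve an unstable equilibrium (moving right as $r$ increases) with the stable equilibrium immediately to its right (moving left), and by the alternating stability pattern of Lemma \ref{st: number of equilibria} these two necessarily originate from two distinct adjacent fold points -- i.e.\ from neighbouring degenerate equilibria in the $P$-ordering, as asserted.

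The main obstacle I anticipate is justifying the transversality condition $\partial^2_{PP} f(P_0,r_0)\neq 0$ at every degenerate equilibrium; without it a higher-codimension degeneracy (cusp or worse) could arise and the clean saddle-node picture would fail. I would either adopt this non-degeneracy as a mild genericity hypothesis on $\pi$ (consistent with Figure \ref{fig:intersections}), or derive it from the geometric statement that the two curves appearing in (\ref{eq: equilibria}) have only quadratic contact. A secondary technicality is that $\pi$ vanishes identically on $[p^{\ast},1]$; there $f(P,r)=-r<0$, so no equilibrium or bifurcation can occur on that subinterval and the analysis is safely confined to $[0,p^{\ast})$.
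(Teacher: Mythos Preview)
Your proposal is correct and substantially more rigorous than the paper's own treatment. In the paper, Observation~\ref{st:obs equilibria} is stated without a formal proof; the only justification offered is the preceding sentence, ``Varying $r$ slides the graph of $\pi(P)-r$ up and down; the equilibria slide with it, staying on the curve,'' together with the geometric picture of Figure~\ref{fig:intersections}. So the paper's argument is purely heuristic: one visualises the left-hand side of~(\ref{eq: equilibria}) being translated vertically and reads off that transversal intersections persist while tangencies create or destroy pairs.

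Your approach, by contrast, is analytic: you exploit the crucial structural feature $\partial_r f\equiv -1$ to guarantee the parametric transversality condition of the saddle-node theorem automatically, invoke the implicit function theorem along non-degenerate branches, and then derive the monotonicity in part~(2) from the clean formula $dP^{\ast}/dr=1/\partial_P f$ combined with the sign criterion for stability. This buys you an actual proof rather than a picture, and it makes explicit the one genuine hypothesis the paper glosses over --- namely the genericity condition $\partial^2_{PP}f\neq 0$ at fold points, without which higher-codimension degeneracies (cusps) are not excluded. Your honest flagging of this point is appropriate; the paper implicitly assumes it via the phrase ``non-degenerate'' in Lemma~\ref{st: number of equilibria} and the appeal to transversal intersections, but never states it as a hypothesis on~$\pi$.
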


The equilibria corresponding to $r=0$ are all points with $P>p^{\ast}$; ones corresponding to $r=1$ are those of the equation $\pi(P)-1 = \epsilon \pi'(P)(1-P)$.

As for the shape of the bifurcation diagram, observe that we may return to the original form and treat our problem as locating zeros of $\pi(P)-\epsilon\pi'(P)(1-P)-r$; we observe how the zeros change when we move this graph up and down; this allows us to conclude that

\begin{lemma}
    
The bifurcation diagram for $r$ is the $\pi/2$-rotated graph of $\pi(P)-\epsilon\pi'(P)(1-P)$ that was cut off at   $\pi(P)-\epsilon\pi'(P)(1-P) = 0$ and  $\pi(P)-\epsilon\pi'(P)(1-P)=1$.  
\end{lemma}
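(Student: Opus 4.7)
My plan is to solve the equilibrium equation explicitly for $r$ as a function of $P$ and then identify the resulting level set as an axis-swapped graph. Rewriting the equilibrium condition (\ref{eq: equilibria}) gives
\begin{equation*}
r \;=\; \pi(P)\;-\;\epsilon\,\pi'(P)(1-P)\;=:\;F(P).
\end{equation*}
The bifurcation diagram for $r$ is, by definition, the set of pairs $(r,P^*)$ where $P^*$ is an equilibrium of (\ref{eq: adaptive dynamics}) at parameter value $r$, i.e.\ the set
\begin{equation*}
\{(r,P^*)\in(0,1)\times[0,1] \,:\, r=F(P^*)\}.
\end{equation*}

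This set is precisely the graph $\{(P,F(P)) : P\in[0,1]\}$ with its two coordinates interchanged. Interchanging the horizontal and vertical axes of a planar curve is the same operation, up to reflection, as a rigid rotation by $\pi/2$; as a subset of the plane, the bifurcation diagram therefore coincides with the $\pi/2$-rotated graph of $F$. This identification also retroactively explains Lemma \ref{st: number of equilibria} and Observation \ref{st:obs equilibria}: the local extrema of $F$ correspond to fold (saddle-node) points of the bifurcation curve, and the number of preimages of a generic $r$ under $F$ is odd.

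Finally, since the relative risk is constrained to $r\in(0,1)$, I would keep only the portion of the rotated graph lying between the two horizontal lines $F(P)=0$ and $F(P)=1$; under the rotation these become the vertical boundaries $r=0$ and $r=1$, which is the ``cut off'' stated in the lemma.

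The only mild subtlety — and the one worth spelling out carefully — is the geometric meaning of ``$\pi/2$-rotated''. It is \emph{not} a claim that $F$ is invertible (it generically is not, which is exactly what produces multiple equilibria and saddle-node bifurcations), so ``$\pi/2$-rotated graph'' is the correct geometric description while ``graph of $F^{-1}$'' would be misleading. Apart from this clarification, no further computation is required beyond the identification of the two sets.
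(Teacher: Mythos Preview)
Your argument is correct and is essentially the same as the paper's: the paper simply notes, in the sentence preceding the lemma, that the equilibria are the zeros of $\pi(P)-\epsilon\pi'(P)(1-P)-r$, so that the bifurcation diagram is obtained by reading off $r=F(P)$ and swapping axes, with the restriction $r\in[0,1]$ giving the cut-off. Your write-up is in fact more explicit than the paper's, which gives no formal proof beyond that one observation.
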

    Figure \ref{fig: bifurcation diagrams} (a) shows a potential form of the bifurcation diagram; the set of equilibria is on the horizontal axis and denoted by $P^{\ast}$. 
    \begin{figure}
        \centering
     \includegraphics[scale=.4]{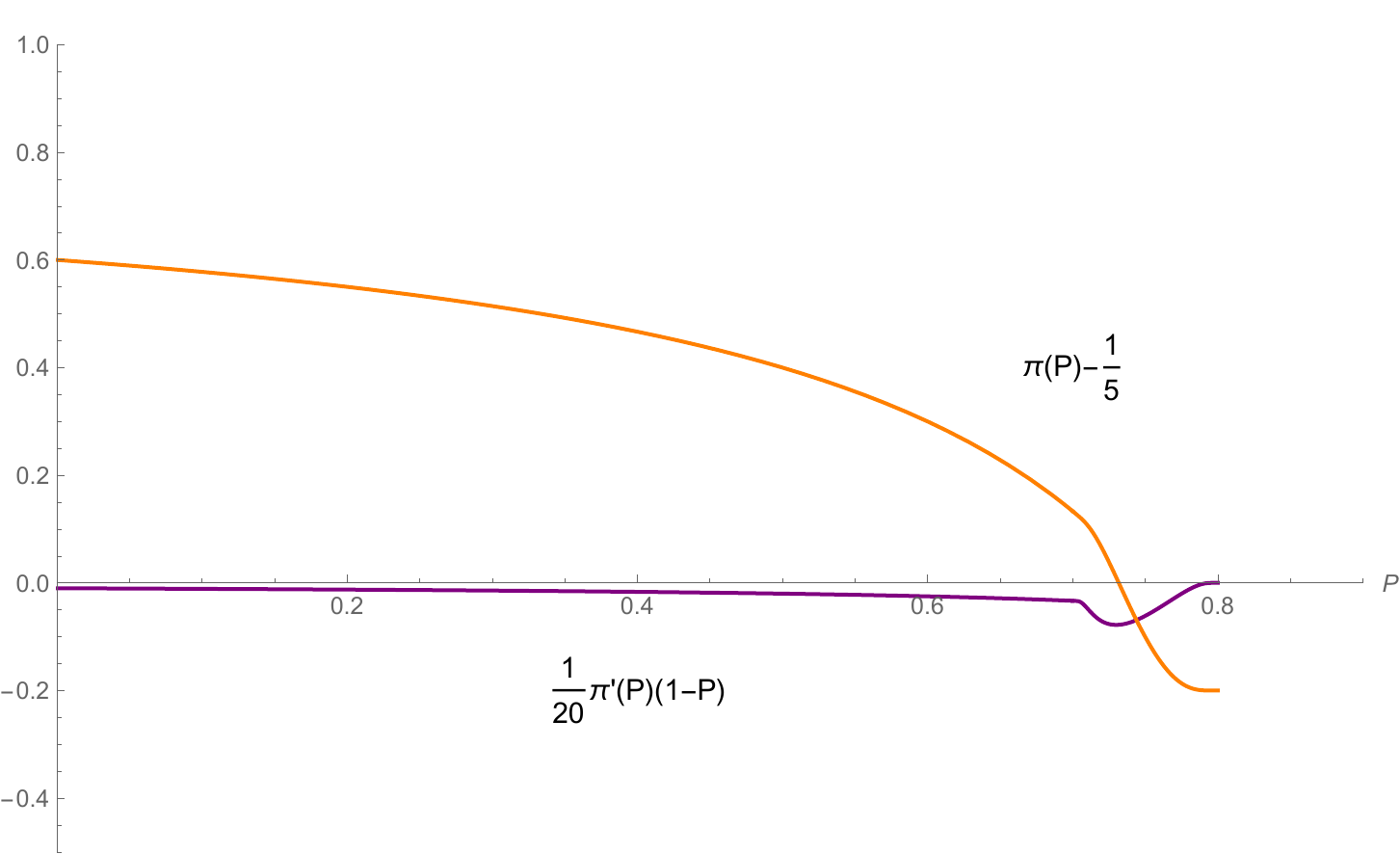}
        \caption{Intersections in Example \ref{ex: concrete stuff}}
        \label{fig: intersections example}
    \end{figure}

\subsection{Parameter $\epsilon$}
Now we move on to the second parameter, $\epsilon$. The bifurcations in it are slightly more complex than those in $r$, but we can make the same remarks as those we did about bifurcations in $r$ in Observation \ref{st:obs equilibria}. 

Note that $\pi'(P)(1-P)$ does not assume zero values unless $P>p^{\ast}$; unless $r=0$, the equation (\ref{eq: adaptive dynamics}) does not have zeros in the interval $[p^{\ast},1]$. 

So if we assume that $r>0$ and $P<p^{\ast}$, we can rewrite (\ref{eq: equilibria}) as 
\begin{equation}
\label{eq: varying epsilon}
\frac{\pi(P)-r}{\pi'(P)(1-P)} -\epsilon= 0.
\end{equation}

Thus, we can state that 
\begin{observation}
When $r\ne 0$, the bifurcation diagram in $\epsilon $ is the $\pi/2$ rotated graph of the function $\frac{\pi(P)-r}{\pi'(P)(1-P)}$, cut off at the lines $\frac{\pi(P)-r}{\pi'(P)(1-P)} = 0$ and $\frac{\pi(P)-r}{\pi'(P)(1-P)}=1$

\end{observation}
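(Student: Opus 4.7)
The plan is to derive the identity for the bifurcation diagram by solving the equilibrium condition \eqref{eq: equilibria} explicitly for $\epsilon$ as a function of $P$, then interpret the resulting graph geometrically. This converts the problem of tracking zeros of a one-parameter family of functions into the much simpler problem of plotting a single explicit function and swapping axes.

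First I would narrow the range of possible equilibria. Since $r\neq 0$, the remark immediately preceding the observation shows that \eqref{eq: adaptive dynamics} has no equilibrium in $[p^{\ast},1]$: on that interval $\pi(P)=0$ and $\pi'(P)(1-P)=0$, so the equation reduces to $-r=0$. Hence every equilibrium lies in $[0,p^{\ast})$, where $\pi'(P)(1-P)\neq 0$ (in fact strictly negative by assumption~(3) on $\pi$). On this interval we can divide \eqref{eq: equilibria} by $\pi'(P)(1-P)$ and rewrite it in the form \eqref{eq: varying epsilon}, solving uniquely for the parameter as
\[
\epsilon \;=\; \phi(P) \;:=\; \frac{\pi(P)-r}{\pi'(P)(1-P)}.
\]
Consequently the equilibrium set $\{(P,\epsilon)\in[0,p^{\ast})\times\mathbb{R} : f(P;r,\epsilon)=0\}$ is exactly the graph of $\phi$.

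Next I would translate this into the bifurcation diagram. By convention the bifurcation diagram places the parameter $\epsilon$ on the horizontal axis and the equilibrium locus $P^{\ast}$ on the vertical axis, i.e.\ it is the image of the graph of $\phi$ under the axis-swap $(P,\epsilon)\mapsto(\epsilon,P)$, which is precisely the rotation by $\pi/2$ referred to in the statement. Finally, only values $\epsilon\in[0,1]$ carry meaning for the underlying model (it is a population proportion), so the physically admissible portion of the graph is obtained by truncating at the level sets $\phi(P)=0$ and $\phi(P)=1$, which become the two vertical lines $\epsilon=0$ and $\epsilon=1$ after rotation. This yields the claimed description.

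The argument is essentially bookkeeping; the only subtle point is verifying that the parametrization $P\mapsto\phi(P)$ correctly reproduces the qualitative features already recorded in Observation~\ref{st:obs equilibria}, in particular that saddle-node bifurcations correspond to turning points of $\phi$. I would dispatch this by implicit differentiation: writing $F(P,\epsilon):=-r-\epsilon\pi'(P)(1-P)+\pi(P)$, one has $\partial_\epsilon F=-\pi'(P)(1-P)\neq 0$ on $[0,p^{\ast})$, so by the implicit function theorem the equilibrium locus is smooth there and a fold $\partial_P F=0$ is equivalent to $\phi'(P)=0$. This confirms that the rotated truncated graph of $\phi$ faithfully encodes both the count and the stability-pairing structure of equilibria asserted in Lemma~\ref{st: number of equilibria}, making the geometric picture in Figure~\ref{fig: bifurcation diagrams}(b) the whole story.
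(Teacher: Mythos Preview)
Your proof is correct and follows essentially the same route as the paper: the paper simply rewrites the equilibrium condition as \eqref{eq: varying epsilon}, solving for $\epsilon$ in terms of $P$ on $[0,p^{\ast})$, and states the observation as an immediate consequence of that rewriting together with the constraint $\epsilon\in[0,1]$. Your additional verification via the implicit function theorem that folds of the equilibrium locus correspond to critical points of $\phi$ is a pleasant supplementary check that the paper itself omits.
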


\begin{figure}
\subfigure[$\epsilon = 0.078$]{\includegraphics[scale=.35]{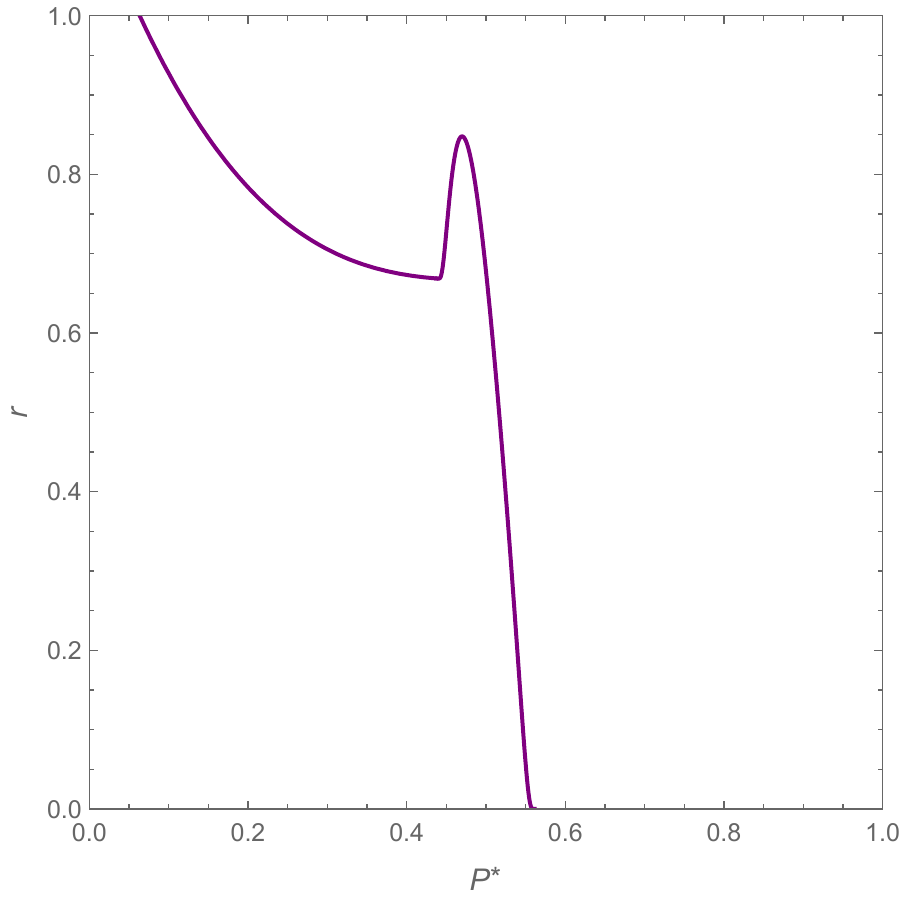}}
\subfigure[$\epsilon = 0.93
$]{\includegraphics[scale=.35]{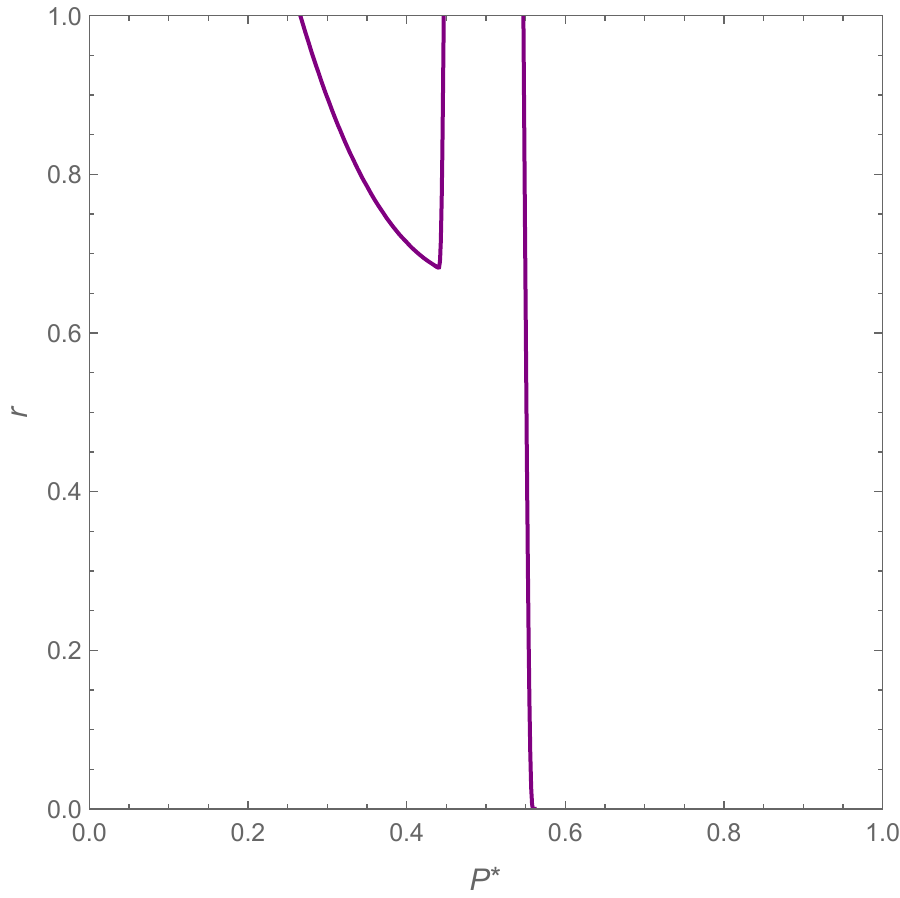}}
\caption{A schematic bifurcation diagram for varying $r$ in Example \ref{ex: second example}. Note that we did not include the vertical line on the $P^{\ast}$-axis in this figure.}
    \label{fig: example 2 bifurcations r}
\end{figure}
\begin{lemma}
    Varying $\epsilon$   results in saddle-node bifurcations of (\ref{eq: equilibria}). 
\end{lemma}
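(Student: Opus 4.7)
The plan is to verify the two standard conditions for a saddle-node bifurcation in the one-parameter family $\dot P=F(P,\epsilon)$, where
\[
F(P,\epsilon):=\pi(P)-r-\epsilon\pi'(P)(1-P)
\]
and $\epsilon$ is the bifurcation parameter. Any bifurcation point $(P_0,\epsilon_0)$ must satisfy $F=0$ together with $\partial_P F=0$; such a point is a saddle-node precisely when, in addition, the transversality condition $\partial_\epsilon F(P_0,\epsilon_0)\neq 0$ and the quadratic non-degeneracy $\partial_P^2 F(P_0,\epsilon_0)\neq 0$ both hold. I will therefore check these two conditions in turn at every candidate bifurcation point.

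First I would settle transversality, which follows from a one-line computation: $\partial_\epsilon F(P,\epsilon)=-\pi'(P)(1-P)$, and by the smoothness and strict monotonicity assumptions on $\pi$ this is strictly positive on $[0,p^{\ast})$. Theorem \ref{st: existence of zeros} guarantees that every equilibrium of interest lies in this interval, so transversality holds at every candidate bifurcation point. By the implicit function theorem this also recovers the global parametrisation
\[
\epsilon=G(P):=\frac{\pi(P)-r}{\pi'(P)(1-P)}
\]
of the equilibrium set already used in the preceding observation, with degenerate equilibria corresponding exactly to critical points of $G$.

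The main obstacle is the quadratic non-degeneracy $\partial_P^2 F(P_0,\epsilon_0)\neq 0$. Differentiating the identity $F(P,G(P))=0$ twice at a critical point of $G$ shows this is equivalent to $G''(P_0)\neq 0$, which is not forced by the listed hypotheses on $\pi$. I would dispose of this by a mild genericity argument: the condition $G''(P_0)=0$ defines an additional codimension-one locus, so together with $G'(P_0)=0$ it is codimension two and does not occur along a generic one-parameter path in $\epsilon$. Wherever $G''(P_0)\neq 0$, the point $P_0$ is a simple local extremum of $G$, which produces the geometry of two equilibria (one stable, one unstable, by Lemma \ref{st: number of equilibria}) colliding and annihilating as $\epsilon$ crosses $\epsilon_0=G(P_0)$ --- precisely a saddle-node bifurcation. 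Thus the observations made for the $r$-parameter case in Observation \ref{st:obs equilibria} carry over verbatim to $\epsilon$.
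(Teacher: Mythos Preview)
Your proposal is correct and considerably more detailed than the paper's own treatment: the paper gives no formal proof of this lemma at all, simply stating it and relying implicitly on the preceding observation that the bifurcation diagram in $\epsilon$ is the rotated graph of $G(P)=(\pi(P)-r)/(\pi'(P)(1-P))$, together with the geometric remarks already made for the $r$-parameter in Observation~\ref{st:obs equilibria}. Where the paper argues pictorially (a smooth curve can only create or destroy equilibria in pairs at its turning points), you verify the transversality and quadratic non-degeneracy conditions of the saddle-node normal form explicitly. Your computation $\partial_\epsilon F=-\pi'(P)(1-P)>0$ on $[0,p^{\ast})$ is precisely what justifies the paper's rewriting~\eqref{eq: varying epsilon}, so the two routes are compatible; yours just makes the analytic content visible.

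One minor imprecision worth tightening: you phrase the genericity for $G''(P_0)\neq 0$ as ``does not occur along a generic one-parameter path in $\epsilon$'', but the location and degeneracy of the critical points of $G$ are fixed by $\pi$ and $r$ and do not depend on $\epsilon$ at all. If $G$ happens to have a degenerate critical point at some $P_0$, then at the specific value $\epsilon_0=G(P_0)$ the bifurcation is \emph{not} a simple saddle-node, and no genericity in $\epsilon$ avoids it. The correct genericity hypothesis is on the model data: for a generic choice of $\pi$ (or of $r$, since $G$ depends on both), $G$ is a Morse function and every bifurcation in $\epsilon$ is a saddle-node. The paper does not address this subtlety either, so your argument remains the more careful of the two.
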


Figure \ref{fig: bifurcation diagrams} (b) presents a possible bifurcation diagram in $\epsilon$. 

\subsection{Varying $\epsilon$ and $r$}
In this section we discuss briefly what the three-dimensional bifurcation diagram (in $P$, $r$ and $\epsilon$) would look like.  This two-dimensional surface will give us the exact locations and the number of equilibria if we intersect it with a line $r =r_0$, $\epsilon = \epsilon_0$ for some fixed $r_0,\epsilon_0$. 

The picture in the general case is hard to construct, but it can be restored from its two-dimensional sections as drawn in Figure \ref{fig: bifurcation diagrams} (a) and (b). 
 \begin{remark}
     From Section \ref{sec: parameter r} it follows that connected components of this bifurcational graph will be manifolds: every section by the plane $\epsilon = const$ will be a cutoff of the graph of a smooth function depending on $P$, and the curve in question varies smoothly with varying $\epsilon$. 
 \end{remark}

For the general case, every line $P^{\ast}=constant$ will intersect the diagram in finite number of points (unless $r=0$ or $\epsilon$=0). The 'pointy ends" of the closed curves (the locus of values of $\epsilon$ and $r$ where bifurcations happen) will belong to a single curve. We can easily describe it. 

In order for the left and right hand sides of (\ref{eq: equilibria}) to be tangent to each other, two conditions must be fulfilled: both functions and their derivatives must have the same value ant some point $P$. This can be expressed as 
\begin{equation}
    \label{eq: whatis needed for bifurccation}
    \begin{cases}
        \epsilon \pi'(P)(1-P) + r = \pi(P),\\
        \epsilon\left(\pi''(P)(1-P) - \pi'(P)\right) = \pi'(P).
    \end{cases}
\end{equation}
This is a system of linear equations in $\epsilon$ and $r$ with a solution 
\begin{equation}
    \label{eq: bifurcation curve}
    \epsilon(P) =\frac{\pi'(P)}{\pi''(P)(1-P) - \pi'(P)},\\
    r(P) = \pi(P) - \frac{\left(\pi'(P)\right)^2(1-P)}{\pi''(P)(1-P) - \pi'(P)}.
\end{equation}
This curve contains all the values of $\epsilon$ and $r$ at which the number of equilibria changes for a given point $P^{\ast}$. Intersecting this curve changes the number of equilibria; moving along it ensures that the two graphs will be tangent at some point $P^{\ast}$. 
\section{Examples}
\label{sec: examples}
In the previous sections, we have analysed the stability and bifurcations with respect to the relevant parameters of the adaptive dynamics \eqref{eq: adaptive dynamics} for a general payoff function that satisfies the assumption stated in Section \ref{st: existence of zeros}. In this section, we provide two typical examples of such functions, one with singular equilibrium and the other with multiple equilibria. We also draw the bifurcation diagrams for the latter. 
 \begin{example}
\label{ex: concrete stuff}
\begin{figure}
    \centering
    \subfigure[$r=0.6$]{\includegraphics[scale=.35]{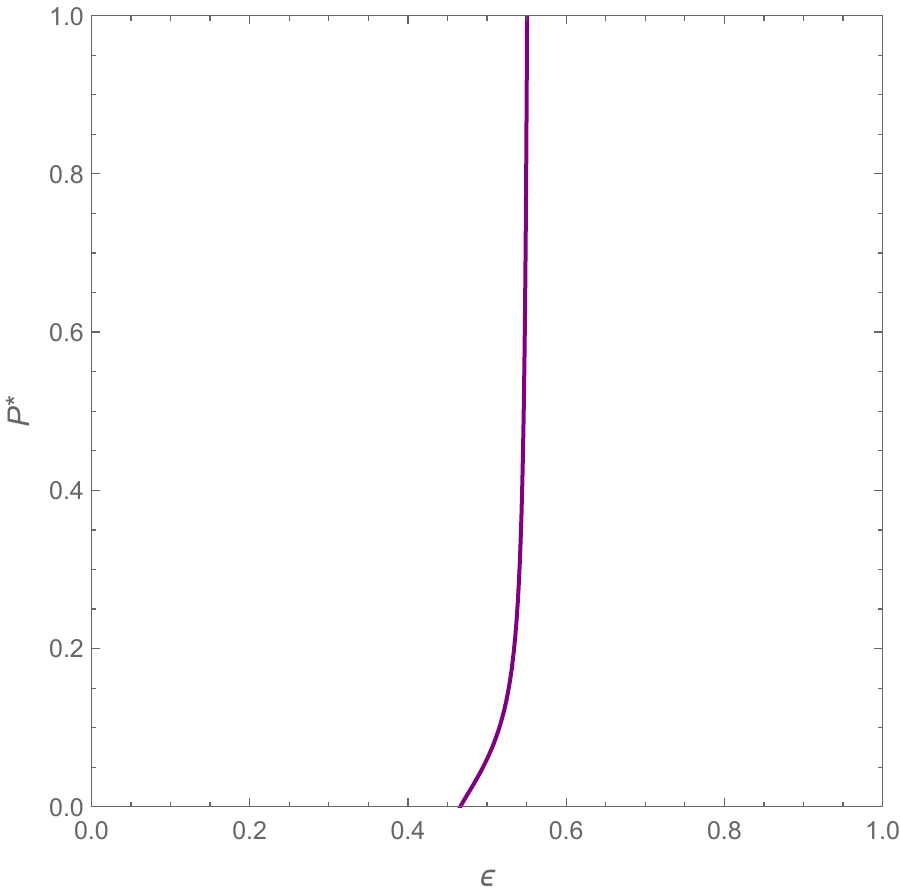}}
    \subfigure[$r=0.67$]{\includegraphics[scale=.35]{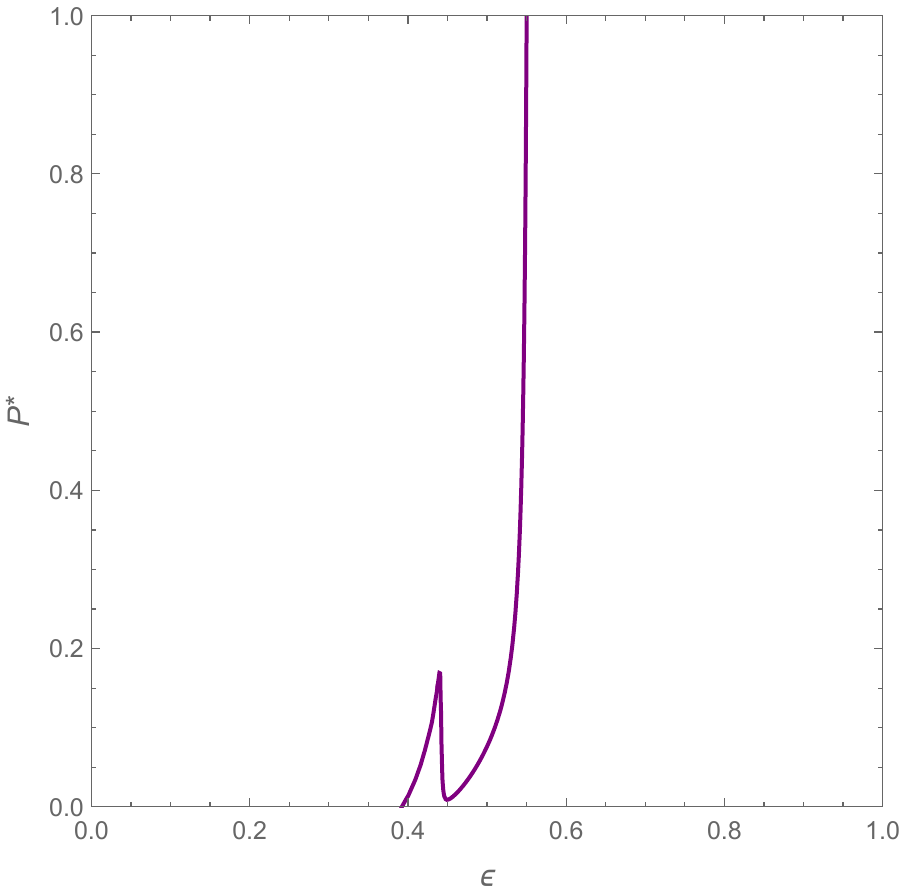}}
     \subfigure[$r=0.679$]{\includegraphics[scale=.35]{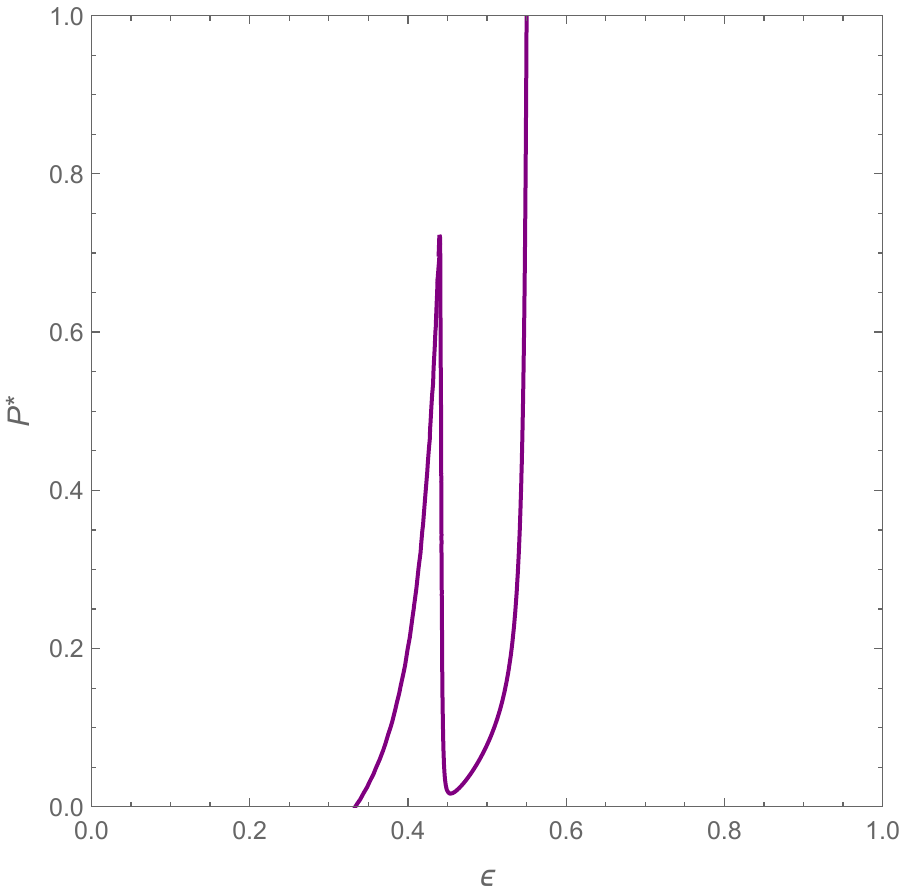}}
       \subfigure[$r=0.946$]{\includegraphics[scale=.35]{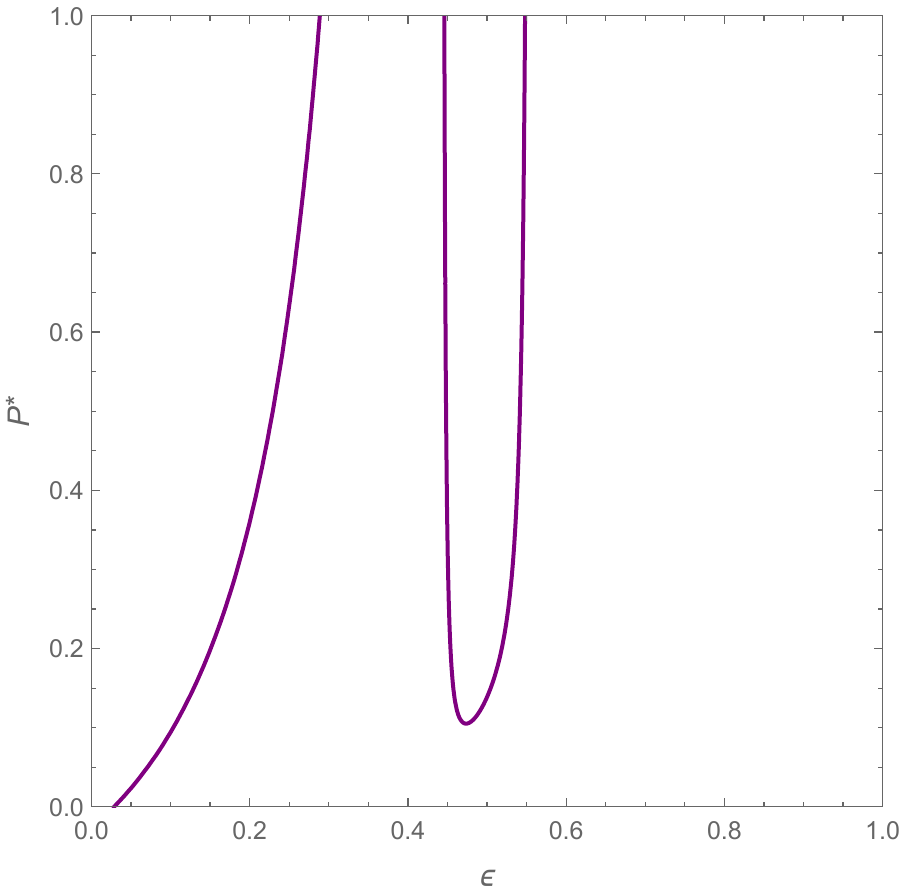}}
       \caption{Bifurcation diagrams for $\epsilon$ with varying values of $r$}
\label{fig: bifurcations in epsilon example 2}
\end{figure}
We want the function $\pi_p$ to be a smooth one; on the other hand, it ought to be equal to 0 when $p^{\ast}\le p\le 1$ for some $p^{\ast}$. The standard way to achieve this objective is the so-called "gluing" function. Below we describe its construction in the general case. 

Consider the function
\begin{equation}
    \label{eq: glue first}
    h(t) = \begin{cases}e^{\frac{1}{t^2-1}}, \ -1\le t \le 1\\
    0, \ \text{otherwise}
    \end{cases}
\end{equation}
This function in infinitely differentiable and smooth.

Let
\begin{equation}
    \label{eq: glue second}
    H(x) = \int
    \limits_{-1}^{x}h(t)\mathrm{d} t
\end{equation}
and 
\begin{equation}
    \label{eq: glue third}
    g(x) =\frac{H(2x-1)}{H(1)},
\end{equation}
also an infinitely differentiable and smooth function, that is 0 when $x\le 0$ and $1$ when $x\ge 1$. Additionally, $g(x)$ is monotonously increasing.

For the probability function, we use  the  example of $\pi(p)$ from \cite{crawford1991introduction}. Namely, we take
\begin{equation}
    \label{eq: pi p initial}
    \widetilde{\pi}(p) = 1 - \frac{1}{R_0(1-p)}
\end{equation}
with $R$ the basic reproductive ratio (considered to be a fixed number between 5 and 20). 

For concreteness, let $R=5$. Then the function $\pi(p) = 0$ when $p=\frac45$; we want our modified function to be zero from there.

To this end, we change $g(x)$ accordingly: firstly, it needs to become a decreasing function. Secondly, we want it to be zero when $x\ge \frac{4}{5}$; thirdly, we want the measure of the locus of the points where it decreases to be small - say, $\frac{1}{10}$, in order to minimise the effect that the gluing function has on the probability of becoming infected. 

To conform with these conditions, instead of $g(x)$ we must consider $\widetilde{g}(x):= 1 - g\left(10x-5\right)$. This function is 1 when $x\le \frac{7}{10}$, monotonously decreases when $\frac{7}{10}\le x\le \frac{4}{5}$ and assumes zero value when $x>\frac{4}{5}$.

Then the final version of the probability function is 
\begin{equation}
    \label{eq: pi final version}
    \pi(p) = \left(1 - \frac{1}{5(1-p)}\right)\widetilde{g}(p).
\end{equation}

This function is strictly  monotonously decreasing and smooth. It is 1 when $p=0$ and turns and stays 0 when $p\ge \frac45$. Its second derivative changes sign, meaning that the function $\pi'(P)$ is not monotonously increasing or decreasing. 

Setting $r = \frac15$ allows to  plot the two functions, and it can be shown that they only intersect at one point, see Figure \ref{fig: intersections example}. 

It can be ascertained numerically that the unique intersection point is $P\approx 0.7440219337$

\end{example}
\begin{example}
\label{ex: second example}
    We employ the same trick of inverting and constructing the glue function to suit our needs and multiply it by a decreasing function in order to get a suitable probability function.

    In this case, let $\widetilde{g}(x) = 1 - g(8x)$. The transition from 1 to 0  happens over the interval $\left[\frac{7}{16}, \frac{9}{16}\right]$. We multiply it by a decreasing function $\frac{1}{3} \left((1-2 x)^3+2\right)$ (the choice is motivated by its second derivative changing the sign) to get 
    \begin{equation}
    \label{eq: pi new}
        \pi(p) = \frac{1}{3} \widetilde{g}(p) \left((1-2 x)^3+2\right)
    \end{equation}
    Then its first derivative is 
    \begin{equation}
    \label{eq: pi derivative}
        \begin{split}
            \pi'(p) &= -2(1-2x)^2 \left(1-\frac{\int\limits_{-1}^{8(2x-1)}\exp \left(\frac{1}{t^2-1}\right)\mathrm{d}t}{H(1)}\right)\\&-\frac{16}{3} \exp \left(\frac{1}{(8 (2 x-1))^2-1}\right) \left((1-2 x)^3+2\right)\frac{1}{H(1)}
        \end{split}
    \end{equation}
Note that the number $\frac{1}{H(1)}$ is the coefficient from (\ref{eq: glue third}) and approximately equal to 2.2522836206907613.

The number of equilibria varies from 1 to 3 depending on the values of $r$ and $\epsilon$. Thus, the natural direction of investigation is to draw a bifurcation diagram-- that is Figures \ref{fig: example 2 bifurcations r} and \ref{fig: bifurcations in epsilon example 2}.

The former depicts the changes in the $r$-bifurcation diagram with varying values of $\epsilon$.  The  peak moves up with the increase in $\epsilon$ until it is no longer within the $[0,1]$ interval.

The second figure shows four $\epsilon$-bifurcation diagrams for various fixed values of $r$.  Note that with the increase of $r$ the beginning of the  curve becomes "pinched", and the new tip of the curve  moves upwards until it disappears and the diagram becomes disjoint.

Both Figures \ref{fig: bifurcations in epsilon example 2} and \ref{fig: example 2 bifurcations r} are sections of the 3-dimensional bifurcation diagram, as shown in Figure \ref{fig:3 dim bif ex 2}. It can be deduced from the bifurcation diagram that the number of solutions does not exceed 3. 

For example, when $\epsilon = 0.188$ and $r = 0.909$, the three solutions are, in increasing order, $P_0\approx0.7107347397$, $P_1\approx 0.4517078461$ and $P_2\approx0.5068541630 $

\begin{figure}
    \subfigure[View from the left]{\includegraphics[scale=.6]{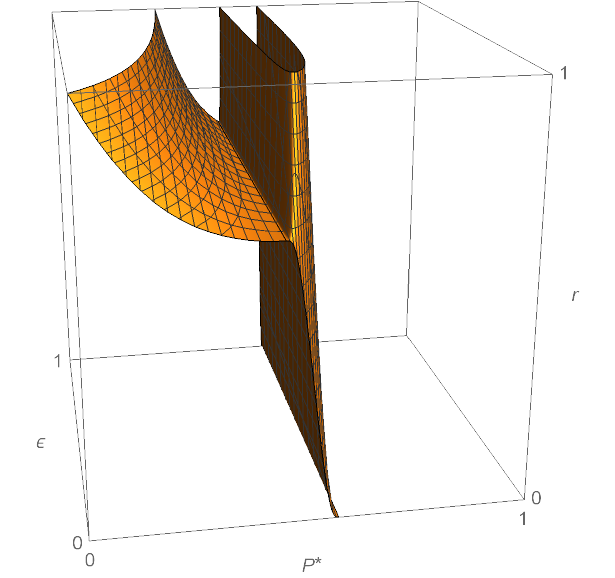}}
    \subfigure[View from the right]{\includegraphics[scale=.6
]{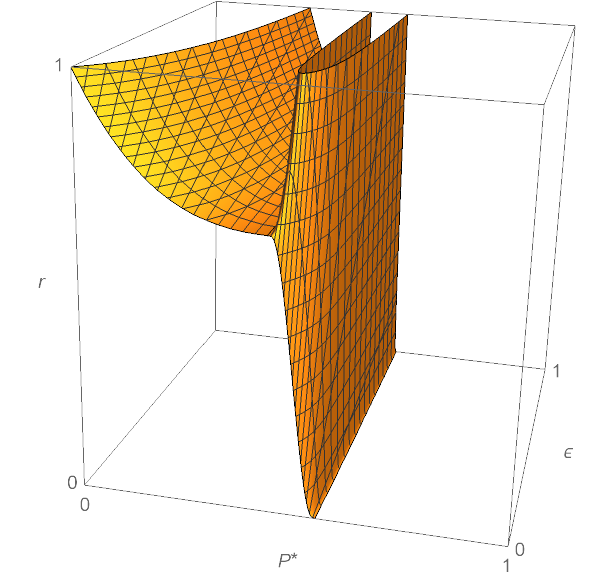}}
    \caption{Three dimensional bifurcation diagram for Example \ref{ex: second example}. The number of equilibria and their values can be obtained by intersecting this graph with some line $r = const, \epsilon = const$. }
    \label{fig:3 dim bif ex 2} 
\end{figure}
\end{example}
\section{Conclusion}
In this paper, we have examined the adaptive dynamics for the individual payoff game-theoretic model for vaccination \cite{bauch2004vaccination}. We have employed techniques from analysis and bifurcation theory. We have shown that at least one equilibrium must exist and that the bifurcation arising from the parameters can only lead to one type of bifurcations, which we described through the other parameter and the probability function $\pi(p)$.  We have concluded our investigation by examining closely two particular cases, focusing on the equilibrium for the first one and on bifurcation diagrams for the other.

We expect that the employment of the adaptive dynamics paves the way for further mathematically rigorous study of game-theoretic models for vaccination. In future works, we plan to extend this approach to mathematical models of infectious deceases such as Severe Acute Respiratory Syndrome (SARS) or COVID-19.  
\section*{Acknowledgement} The research of this paper was funded by the EPSRC (grant EP/Y008561/1) and the Royal International Exchange Grant IES-R3-223047.

\end{document}